\newtheorem{theorem}{Theorem}[section]
\newtheorem{lemma}[theorem]{Lemma}
\newtheorem{example}[theorem]{Example}
\newcommand{\scr}{\mathscr}   
      \def\@setcopyright{}
      \def\serieslogo@{}
\begin{document}

   \author{Amin  Bahmanian}
   \address{Department of Mathematics and Statistics
 Auburn University, Auburn, AL USA   36849-5310}
   \email{mzb0004@tigermail.auburn.edu}


   \title[Detachments of Hypergraphs]{Detachments of Hypergraphs I: The Berge-Johnson Problem}

   \begin{abstract}
A detachment of a hypergraph is formed by splitting each vertex into one or more subvertices, and sharing the incident edges arbitrarily among the subvertices. For a given edge-colored hypergraph $\scr F$, we prove that there exists a detachment  $\scr G$ such that the degree of each vertex and the multiplicity of each edge in $\scr F$ (and each color class of $\scr F$) are shared fairly among the subvertices in $\scr G$ (and each color class of $\scr G$, respectively).

Let $(\lambda_1\dots,\lambda_m) K^{h_1,\dots,h_m}_{p_1,\dots,p_n}$ be a hypergraph with vertex partition $\{V_1,\dots, V_n\}$, $|V_i|=p_i$ for $1\leq i\leq n$ such that there are $\lambda_i$ edges of size $h_i$ incident with every $h_i$ vertices, at most one vertex from each part for $1\leq i\leq m$ (so no edge is incident with more than one vertex of a part).  
We use our detachment theorem to show that the obvious necessary conditions for $(\lambda_1\dots,\lambda_m) K^{h_1,\dots,h_m}_{p_1,\dots,p_n}$ to be  
  expressed as the union $\scr G_1\cup \ldots \cup\scr G_k$ of $k$ edge-disjoint factors, where for $1\leq i\leq k$, $\scr G_i$ is $r_i$-regular,  are also sufficient.
Baranyai solved the case of $h_1=\dots=h_m$, $\lambda_1=\dots,\lambda_m=1$, $p_1=\dots=p_m$, $r_1=\dots =r_k$. Berge and  Johnson, (and later Brouwer and Tijdeman, respectively) considered (and solved, respectively) the case of $h_i=i$, $1\leq i\leq m$, $p_1=\dots=p_m=\lambda_1=\dots=\lambda_m=r_1=\dots =r_k=1$. We also extend our result to the case where each  $\scr G_i$ is almost regular.
   \end{abstract}

   \keywords{Amalgamations, Detachments, Factorization, Edge-coloring, Hypergraphs, Decomposition}

   \date{\today}

   \maketitle

\section {Introduction}  
Intuitively speaking, a  detachment of a hypergraph is formed by splitting each vertex into one or more subvertices, and sharing the incident edges arbitrarily among the subvertices. As the main result of this paper (see Theorem \ref{mainthhypgen1}), we prove that for a given edge-colored hypergraph $\scr F$,  there exists a detachment  $\scr G$ such that the degree of each vertex and the multiplicity of each edge in $\scr F$ (and each color class of $\scr F$) are shared fairly among the subvertices in $\scr G$ (and each color class of $\scr G$, respectively). This result is not only interesting by itself and generalizes various graph theoretic results (see for example \cite{BahRod1, H2, HR, MatJohns, LR1, LR3, Nash87, RW}), but also is used to obtain extensions of existing results on edge-decompositions of hypergraphs by Bermond, Baranyai \cite{Baran75, Baran79}, Berge and Johnson \cite{BergeJohnson75, ELJohnson78}, and Brouwer and Tijdeman \cite{Brouwer77hered, BrouTijd81}.

Given a set $N$ of $n$ elements, Berge and Johnson \cite{BergeJohnson75, ELJohnson78} addressed the question of when do there exist disjoint partitions of $N$, each partition containing only  subsets of $h$ or fewer elements, such that every subset of $N$ having $h$ or fewer elements is in exactly one partition. Here we state the problem in a more general setting with the hypergraph theoretic notation.

Let $(\lambda_1\dots,\lambda_m) K^{h_1,\dots,h_m}_{p_1,\dots,p_n}$ be a hypergraph with vertex partition $\{V_1,\dots, V_n\}$, $|V_i|=p_i$ for $1\leq i\leq n$ such that there are $\lambda_i$ edges of size $h_i$ incident with every $h_i$ vertices, at most one vertex from each part for $1\leq i\leq m$ (so no edge is incident with more than one vertex of a part).  
We use our detachment theorem to show that the obvious necessary conditions for $(\lambda_1\dots,\lambda_m) K^{h_1,\dots,h_m}_{p_1,\dots,p_n}$ to be  
  expressed as the union $\scr G_1\cup \ldots \cup\scr G_k$ of $k$ edge-disjoint factors, where for $1\leq i\leq k$, $\scr G_i$ is $r_i$-regular,  are also sufficient.
Baranyai \cite{Baran75, Baran79} solved the case of $h_1=\dots=h_m$, $\lambda_1=\dots,\lambda_m=1$, $p_1=\dots=p_m$, $r_1=\dots =r_k$. Berge and  Johnson  \cite{BergeJohnson75, ELJohnson78}, (and later Brouwer and Tijdeman \cite{Brouwer77hered, BrouTijd81}, respectively) considered (and solved, respectively) the case of $h_i=i$, $1\leq i\leq m$, $p_1=\dots=p_m=\lambda_1=\dots=\lambda_m=r_1=\dots =r_k=1$. We also extend our result to the case where each  $\scr G_i$ is almost regular.

In the next two sections, we give more precise definitions along with terminology. In Section \ref{mainrstate}, we state  our main result,  followed by the proof in Section \ref{proofmainthsec}. In the last section,  we show the usefulness of  the main result on decompositions of various classes of hypergraphs. We defer the applications of the main result in solving embedding problems to a  future paper.

\section{Terminology and precise definitions} 
If $x, y\in \mathbb{R}$ ($\mathbb{R}$ is the set of real numbers), then $\lfloor x \rfloor$ and $\lceil x \rceil$ denote the integers such that $x-1<\lfloor x \rfloor \leq x \leq \lceil x \rceil < x+1$, and $x\approx y$ means $\lfloor y \rfloor \leq x\leq \lceil y \rceil$. We observe that the relation $\approx$ is transitive (but not symmetric) and for $x, y \in \mathbb{R}$, and $n\in \mathbb{N}$ ($\mathbb{N}$ is the set of positive integers), $x\approx y$ implies $x/n\approx y/n$. These properties of $\approx$ will be used in Section \ref{proofmainthsec} without further explanation. For a multiset $A$ and $u\in A$, let $\mu_A (u)$ denote the multiplicity of $u$ in $A$, and let  $|A|=\sum_{u\in A} \mu_A(u)$. For multisets $A_1,\dots, A_n$, we define $A=\bigcup_{i=1}^n A_i$ by $\mu_{A}(u)=\sum_{i=1}^n \mu_{A_i}(u)$. We may use abbreviations such as $\{u^r\}$ for  $\{{\underbrace{u,\ldots,u}_r}\}$ --- 
for example $\{u^2,v,w^2\}\cup \{u,w^2\}=\{u^3,v,w^4\}$. 

For the purpose of this paper, a \textit {hypergraph} $\scr{G}$ is an ordered quintuple $(V(\scr{G}),  E(\scr{G}), H(\scr{G}),$ $\psi, \phi)$ where $V(\scr{G}),  E(\scr{G}),  H(\scr{G})$ are disjoint finite sets, $\psi:H(\scr{G}) \rightarrow V(\scr G)$ is a function and  $\phi: H(\scr G) \rightarrow E(\scr G)$ is a surjection.  
  Elements of $V(\scr{G}), E(\scr{G}), H(\scr{G})$ are called \textit{vertices}, \textit{edges} and \textit{hinges} of $\scr G$, respectively. 
A vertex $v$ (edge $e$, respectively) and hinge $h$ are said to be \textit{incident} with each other if $\psi(h)=v$ ($\phi(h)=e$, respectively). A hinge $h$ is said to \textit{attach} the edge $\phi(h)$ to the vertex $\psi(h)$. In this manner, the vertex $\psi(h)$ and the edge $\phi(h)$ are said to be \textit{incident} with each other. If $e\in E(\scr G)$, and $e$ is incident with $n$ hinges $h_1, \ldots, h_n$ for some $n\in \mathbb N$, then the edge $e$ is said to \textit{join} (not necessarily distinct) vertices $\psi(h_1), \ldots, \psi(h_n)$. 
 If $v\in V(\scr G)$, then the number of hinges incident with $v$ (i.e. $|\psi^{-1}(v)|$) is  called the \textit{degree} of $v$ and is denoted by $d(v)$.  The number of (distinct) vertices incident with an edge $e$, denoted by $|e|$, is called the \textit{size} of $e$. 
If for all edges $e$ of $\scr G$, $|e|\leq 2$ and $|\phi^{-1}(e)|=2$, then $\scr G$ is a \textit{graph}.

Thus a hypergraph, in the sense of our definition,  is a generalization of a hypergraph as it is usually defined. In fact, if for every edge $e$, $|e|=|\phi^{-1}(e)|$, then our definition is essentially the same as the usual definition. Here for convenience, we imagine each edge of a hypergraph to be attached to the vertices which it joins by in-between objects called hinges. Readers from a graph theory background may think of this as a bipartite multigraph with vertex bipartition $\{V, E\}$, in which the hinges form the edges. A hypergraph may be drawn as a set of points representing the vertices. A hyperedge is represented by a simple closed curve enclosing its incident vertices. A hinge is represented by a small line attached to the vertex incident with it (see Figure \ref{figure:hypexamplecpc}).

The set of hinges of $\scr G$ which are incident with a vertex $v$ (and an edge $e$, respectively), is denoted by $H(v)$ ($H(v,e)$, respectively).  Thus if $v\in V(\scr G)$, then $H(v)=\psi^{-1}(v)$, and $|H(v)|$ is the degree $d(v)$ of $v$. If $U$ is a multi-subset of $V(\scr G)$, and $u\in V(\scr G)$, let $E(U)$ denote the set of edges $e$ with $|\phi^{-1}(e)|=|U|$ joining vertices in $U$. More precisely, $E(U)=\{e\in E(\scr G) | \mbox { for all }v\in V(\scr G), |H(v,e)|=\mu_U(v)\}$. 
 For $U_1,\dots,U_n\subset V$ where for $1\leq i\leq n$ each $U_i$ is a multiset, let $E(U_1,\dots,U_n)$ denote $E(\bigcup_{i=1}^n U_i)$.  
We write $m(U)$ for $|E(U)|$ and call it the \textit {multiplicity} of $U$.  For simplicity, $E(u^r,U)$ denotes $E(\{u^r\},U)$, and  $m(u_1^{m_1},\dots,u_r^{m_r})$ denotes $m(\{u_1^{m_1},\dots,u_r^{m_r}\})$.  
 The set of hinges that are incident with $u$ and an edge in $E(u^r,U)$ is denoted by $H(u^r,U)$. 
\begin{example}\label{hyp1ex}
\textup{
 Let $\scr G=(V,  E, H, \psi, \phi)$, with $V=\{v_1,v_2,v_3, v_4, v_5\}, E=\{e_1,e_2, e_3\}, H=\{h_i, 1\leq i\leq 7\}$, such that $\psi(h_1)=\psi(h_2)=v_1, \psi(h_3)=v_2, \psi(h_4)=\psi(h_5)=v_3, \psi(h_6)=v_4, \psi(h_7)=v_5$ and $\phi(h_1)=\phi(h_2)=\phi(h_3)=\phi(h_4)=e_1, \phi(h_5)=\phi(h_6)=e_2, \phi(h_7)=e_3$. We have:
\begin{figure}[htbp]
\begin{center}
\scalebox{.75}{ \input{cpcnotationexample.pstex_t} }
\caption{Representation of a hypergraph $\scr G$ }
\label{figure:hypexamplecpc}
\end{center}
\end{figure} 
}\end{example}

\begin{itemize}
\item $|e_1|=3, |e_2|=2, |e_3|=1$,
\item $d(v_1)=d(v_3)=2, d(v_2)=d(v_4)=d(v_5)=1$,
\item $H(v_1)=\{h_1,h_2\}, H(v_2)=\{h_3\}, H(v_3)=\{h_4,h_5\}$,
\item $H(v_3,e_1)=\{h_4\}, H(v_3,e_2)=\{h_5\}, H(v_3,e_3)=\varnothing$,
\item $E(\{v_1,v_2,v_3\})=\varnothing, E(\{v_1^2,v_2,v_3\})=E(v_1^2,\{v_2,v_3\})=\{e_1\}$,
\item $m(v_1,v_2,v_3)=0, m(v_1^2,v_2,v_3)=1$, 
\item $H(v_1^2,\{v_2,v_3\})=\{h_1,h_2\}, H(v_1,\{v_2,v_3\})=\varnothing, H(v_3,\{v_1^2,v_2\})=\{h_4\}$.

\end{itemize} A \textit{k-edge-coloring} of $\scr G$ is a mapping $f: E(\scr G)\rightarrow C$, where $C$ is a set of $k$ \textit{colors} (often we use $C=\{1,\ldots,k\}$), and the edges of one color form a \textit{color class}. The sub-hypergraph of $\scr G$ induced by the color class $j$ is denoted by $\scr G(j)$. 
To avoid ambiguity, subscripts may be used to indicate the hypergraph in which hypergraph-theoretic notation should be interpreted --- for example, $d_{\scr G}(v)$, $E_{\scr G}(v^2,w)$, $H_{\scr G}(v)$.

\section{Amalgamations and detachments}
If $\scr F=(V,  E, H, \psi, \phi)$ is a hypergraph and $\Psi$ is a function from $V$ onto a set $W$, then we shall say that the hypergraph $\scr G=(W,  E, H, \Psi \circ \psi, \phi)$ is an \textit{amalgamation} of $\scr F$ and that $\scr F$ is a \textit{detachment} of $\scr G$. Associated with $\Psi$ is the \textit{number function} $g:W\rightarrow \mathbb N$  defined by $g(w)=|\Psi^{-1}(w)|$, for each $w\in W$; being more specific, we may also say that $\scr F$ is a \textit{g-detachment} of $\scr G$. Intuitively speaking, a $g$-detachment of $\scr G$ is obtained by splitting each $u\in V(\scr G)$ into $g(u)$ vertices. Thus $\scr F$ and $\scr G$ have the same edges and hinges, and each vertex $v$ of $\scr G$ is obtained by identifying those vertices of $\scr F$ which belong to the set $\Psi^{-1}(v)$. In this process, a hinge incident with a vertex $u$ and an edge $e$ in $\scr F$ becomes incident with the vertex $\Psi(u)$ and the  edge $e$ in $\scr G$. 

There are quite a lot of other papers on  amalgamations  and some highlights include  \cite{FerenHilton, AJWHIL80, AJWHIL87, H2, HR, MatJohns, Nash87, RW}. 

\section {Main Result} \label{mainrstate}
A function $g:V(\scr G)\rightarrow {\mathbb N}$ is said to be \textit{simple} if 
$$|H(v,e)|\leq g(v) \mbox { for } v\in V(\scr G), e\in E(\scr G).$$
A hypergraph $\scr G$ is said to be \textit{simple} if $g:V(\scr G)\rightarrow {\mathbb N}$ with $g(v)=1$ for $v\in V(\scr G)$ is simple. 
It is clear  that for a hypergraph $\scr F$ and a function $g:V(\scr F)\rightarrow {\mathbb N}$, there exists a simple $g$-detachment if and only if $g$ is simple. 
\begin{theorem} \label{mainthhypgen1}
Let $\scr F$ be a $k$-edge-colored hypergraph and let $g:V(\scr F)\rightarrow {\mathbb N}$ be a simple function. Then there exists a simple $g$-detachment $\scr G$ (possibly with multiple edges) of $\scr F$ with amalgamation function $\Psi:V(\scr G)\rightarrow V(\scr F)$,  $g$  being the number function associated with $\Psi$, such that:
\begin{itemize}
\item [\textup{(A1)}] $d_\scr G(v) \approx d_\scr F(u)/g(u) $ for each $u\in V(\scr F)$ and each $v\in \Psi^{-1}(u);$
\item [\textup{(A2)}] $d_{\scr G(j)}(v) \approx d_{\scr F(j)}(u)/g(u)$ for each $u\in V(\scr F)$, each $v\in \Psi^{-1}(u)$ and $1\leq j\leq k;$
\item [\textup{(A3)}] $m_\scr G(U_1,\dots,U_r) \approx m_\scr F(u_1^{m_1},\dots,u_r^{m_r})/\Pi_{i=1}^r\binom {g(u_i)}{m_i} $ for  distinct $u_1,\dots,u_r\in V(\scr F)$ and  $U_i\subset \Psi^{-1}(u_i)$ with $|U_i|=m_i\leq g(u_i)$ for $1\leq i\leq r;$ 
\item [\textup{(A4)}] $m_{\scr G(j)}(U_1,\dots,U_r) \approx m_{\scr F(j)}(u_1^{m_1},\dots,u_r^{m_r})/\Pi_{i=1}^r\binom {g(u_i)}{m_i} $ for  distinct $u_1,\dots,u_r\in V(\scr F)$ and  $U_i\subset \Psi^{-1}(u_i)$ with $|U_i|=m_i\leq g(u_i)$ for $1\leq i\leq r$  and $1\leq j\leq k.$
\end{itemize}
\end{theorem}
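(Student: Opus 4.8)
The plan is to induct on the quantity $N(g)=\sum_{u\in V(\scr F)}\bigl(g(u)-1\bigr)$, the total number of vertex-splits demanded by $g$. When $N(g)=0$ we have $g\equiv 1$, so $\scr G=\scr F$, $\Psi=\mathrm{id}$, and (A1)--(A4) all hold with equality (recall $x\approx x$). For the inductive step I would fix a vertex $w$ with $n:=g(w)\geq 2$ and perform a single \emph{peeling split}: replace $w$ by two vertices $w^*$ and $w'$, obtaining an intermediate hypergraph $\scr F'$ with the same edges and hinges, where the amalgamation $\scr F'\to\scr F$ has number function $g'(w^*)=1$, $g'(w')=n-1$, and $g'=g$ elsewhere. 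Since $N(g')=N(g)-1$, the induction hypothesis applied to $(\scr F',g')$ yields a simple detachment $\scr G$ of $\scr F'$ satisfying (A1)--(A4) relative to $\scr F'$; as $\scr F'$ is itself a detachment of $\scr F$, the hypergraph $\scr G$ is a detachment of $\scr F$, and it remains to verify (A1)--(A4) for $\scr G$ relative to $\scr F$.

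The heart of the argument is to perform the peeling split \emph{fairly}, so that each parameter is shared between $w^*$ (getting a $1/n$ share) and $w'$ (getting a $(n-1)/n$ share). For each edge $e$ incident with $w$ let $m_e=|H_{\scr F}(w,e)|\leq n$, and choose $x_e:=|H_{\scr F'}(w^*,e)|\in\{0,1\}$, routing the remaining $m_e-x_e$ hinges to $w'$. Simplicity of $\scr F'$ forces $x_e=1$ whenever $m_e=n$ (so that $m_e-x_e\leq n-1=g'(w')$) and leaves $x_e$ free otherwise. I would then require the choice $(x_e)$ to satisfy, simultaneously:
\begin{itemize}
\item [(i)] $\sum_{e\ni w} x_e\approx d_{\scr F}(w)/n$;
\item [(ii)] $\sum_{e\ni w,\, f(e)=j} x_e\approx d_{\scr F(j)}(w)/n$ for each colour $j$;
\item [(iii)] within each class of parallel edges sharing a colour, a multiset of non-$w$ endpoints, and a common value $m_e=a$, the number with $x_e=1$ is $\approx \tfrac{a}{n}$ times the class size.
\end{itemize}
A short computation using $\binom{n-1}{a-1}/\binom{n}{a}=a/n$ shows these are exactly the instances of (A1)--(A4) needed for the single split $\scr F\to\scr F'$, together with the complementary bounds for $w'$, which follow since $x_e+(m_e-x_e)=m_e$ and $\binom{n}{a}=\binom{n-1}{a}+\binom{n-1}{a-1}$.

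The main obstacle is that (i)--(iii) must hold \emph{at once}: rounding each finest class in (iii) independently satisfies (iii) and (A4) but need not control the coarser sums in (i), (ii), since rounding errors can accumulate across the two crossing hierarchies ``by colour'' and ``by endpoint-type''. I would resolve this exactly as in Baranyai's theorem. Arrange the fractional targets $\tfrac{a}{n}\cdot(\text{class size})$ as the cells of a real table whose rows are indexed by colours and whose columns are indexed by endpoint-types; I seek an integral table whose every cell, every row sum, every column sum, and grand total lie within one of the corresponding fractional value. This is furnished by an integral feasible flow in a network with source, one node per row, one node per column, and sink, where the lower and upper capacities on every arc are the floor and ceiling of its fractional target: the fractional table is itself a real feasible flow respecting these bounds, and by total unimodularity of the network constraint matrix an integral feasible flow exists, whose conservation laws keep all margins mutually consistent and within $\approx$ at every level. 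Choosing which interchangeable parallel edges realise each integral cell-total, assigning the forced edges ($m_e=n$) first, then produces the required $(x_e)$ and hence $\scr F'$.

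Finally I would chain the two layers of fairness. For a subvertex $v$ of $\scr G$ lying over $w'$, the induction hypothesis gives $d_{\scr G}(v)\approx d_{\scr F'}(w')/(n-1)$, while the split gives $d_{\scr F'}(w')\approx d_{\scr F}(w)(n-1)/n$; dividing the latter by $n-1$ and using transitivity of $\approx$ yields $d_{\scr G}(v)\approx d_{\scr F}(w)/n$, and $d_{\scr G}(w^*)\approx d_{\scr F}(w)/n$ holds directly, so (A1) follows, while vertices over $u\neq w$ are untouched. The same two-step chaining --- using transitivity of $\approx$, the implication $x\approx y\Rightarrow x/m\approx y/m$, and the Pascal identity above to reconcile the binomial denominators --- establishes (A2)--(A4). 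The only genuinely delicate point is the simultaneous matrix rounding in the preceding paragraph; everything else is bookkeeping with the stated properties of $\approx$.
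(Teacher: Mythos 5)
Your proposal is correct, and its skeleton is the same as the paper's: induct on $\sum_{u}(g(u)-1)$, peel a single subvertex off a chosen vertex (the paper splits $\alpha$ into $\alpha$ and $v_{i+1}$ with $g_{i+1}(v_{i+1})=1$, $g_{i+1}(\alpha)=g_i(\alpha)-1$, exactly your $w^*$, $w'$), prove single-split fairness conditions (the paper's (B1)--(B5) and (C1)--(C5)), and then chain back to $\scr F$ using transitivity of $\approx$, the rule $x\approx y\Rightarrow x/m\approx y/m$, and the identity $(a-b)\binom{a}{b}=a\binom{a-1}{b}=(b+1)\binom{a}{b+1}$ (the paper's invariants (D1)--(D2)). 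Where you genuinely diverge is the key lemma that produces the fair split. The paper applies Nash-Williams' lemma (Lemma~\ref{laminarlem}, \cite[Lemma 2]{Nash87}) to two laminar families of hinge sets: $\scr A_i$ (all hinges at $\alpha$, refined by colour, refined by individual edge) and $\scr B_i$ (hinges at $\alpha$ grouped by endpoint type, refined by colour), obtaining one set $Z_i$ that is simultaneously fair for every member of both families; the per-edge sets inside $\scr A_i$ are what force each edge to send at most one hinge to the new vertex and settle the $|H(\alpha,e)|=g_i(\alpha)$ case automatically. You instead set up the crossing structure explicitly as a colours-by-types matrix and invoke integral circulations/total unimodularity to round cells, rows, columns and the total at once, handling the per-edge constraint separately through the $0/1$ variables $x_e$ and the forced edges by hand. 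Your table is precisely the crossing of the paper's two laminar families, so your flow argument in effect re-proves the special case of the Nash-Williams lemma that the paper needs: this buys self-containedness (and is closer in spirit to Baranyai's original flow method), at the cost of more bookkeeping that the laminar formulation absorbs into a single citation. One detail you should make precise: controlling the grand total is not implied by controlling each row sum (errors of one per row can accumulate), so your network must be closed into a circulation by a return arc from sink to source carrying bounds $\lfloor T\rfloor,\lceil T\rceil$ on the total $T$; with that arc, Hoffman's circulation theorem gives the integral solution you want, and the rest of your argument goes through.
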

A family $\scr A$ of sets is \textit{laminar} if, for every pair $A, B$ of sets belonging to $\scr A$, either $A\subset B$, or $B\subset A$, or $A\cap B=\varnothing$. 
To prove the main result, we need the following lemma:
\begin{lemma}\textup{(Nash-Williams \cite[Lemma 2]{Nash87})}\label{laminarlem}
If $\scr A, \scr B$ are two laminar families of subsets of a finite set $S$, and $n\in \mathbb N$, then there exist a subset $A$ of $S$ such that for every $P\in \scr A \cup \scr B$,  $|A\cap P|\approx |P|/n$. 
\end{lemma}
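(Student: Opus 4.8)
The plan is to recast the statement as the existence of a $0$-$1$ point in a polytope and to read it off from total unimodularity. Writing $x\in\{0,1\}^S$ for the indicator of the sought set $A$ and $x(P)=\sum_{s\in P}x_s$, the requirement $|A\cap P|\approx|P|/n$ is exactly $\lfloor |P|/n\rfloor\le x(P)\le\lceil|P|/n\rceil$. So I would study the polytope
\[
\Pi=\Big\{x\in[0,1]^S:\ \lfloor |P|/n\rfloor\le x(P)\le\lceil|P|/n\rceil\ \text{ for all }P\in\scr A\cup\scr B\Big\}
\]
and aim to produce an integral point of it. First I would verify feasibility and the existence of a vertex: the uniform vector $x_s=1/n$ gives $x(P)=|P|/n$, which lies between $\lfloor|P|/n\rfloor$ and $\lceil|P|/n\rceil$, so $\Pi\neq\varnothing$; being a nonempty subset of the cube $[0,1]^S$ it is bounded and hence has a vertex. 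It then remains to guarantee that some vertex is integral, for which I would invoke the standard Hoffman--Kruskal fact that a polyhedron $\{x:\ell\le Mx\le u,\ 0\le x\le 1\}$ with integral bounds has only integral vertices whenever the incidence matrix $M$ of $\scr A\cup\scr B$ is totally unimodular. A $0$-$1$ vertex is precisely the indicator of a set $A$ with the required property.

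The heart of the argument --- and the step I expect to be the main obstacle --- is the total unimodularity of $M$, the matrix whose rows are the members of the two laminar families and whose columns are the elements of $S$. I would establish it through the Ghouila--Houri criterion: for every subfamily $R\subseteq\scr A\cup\scr B$ one must $\pm$-sign its members so that, for each $s\in S$, the signed number of chosen sets containing $s$ lies in $\{-1,0,1\}$. Writing $R=R_A\cup R_B$ with $R_A\subseteq\scr A$ and $R_B\subseteq\scr B$ (each still laminar), define the \emph{depth} of a set in $R_A$ to be the number of members of $R_A$ properly containing it, and likewise in $R_B$. I would sign a set of depth $d$ in $R_A$ by $(-1)^{d}$ and a set of depth $d$ in $R_B$ by $(-1)^{d+1}$.

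The key observation is that, since $R_A$ is laminar, the members of $R_A$ containing a fixed $s$ form a chain whose depths are exactly $0,1,\dots,a-1$; hence their signed contribution telescopes to $\sum_{j=0}^{a-1}(-1)^{j}\in\{0,1\}$, and the identical computation for $R_B$ yields a contribution in $\{-1,0\}$. Therefore the total signed count at every $s$ lies in $\{-1,0,1\}$, which is exactly the Ghouila--Houri condition, so $M$ is totally unimodular. The only genuinely delicate point is this coordinated sign assignment: by biasing the roots of the two laminar forests with opposite signs, the two per-element contributions are forced into $\{0,1\}$ and $\{-1,0\}$ and so cannot reinforce to $\pm2$ --- precisely the failure a single, uncoordinated parity colouring would risk.

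Finally I would assemble the pieces: feasibility of the uniform point shows $\Pi$ is nonempty, total unimodularity of $M$ together with the integrality of the bounds $\lfloor|P|/n\rfloor$, $\lceil|P|/n\rceil$, $0$, $1$ shows every vertex of $\Pi$ is a $0$-$1$ vector, and any such vertex is the indicator of a set $A$ with $|A\cap P|\approx|P|/n$ for all $P\in\scr A\cup\scr B$, as required.
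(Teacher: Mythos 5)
Your proposal is correct, and it takes a genuinely different route from the paper's: the paper does not prove this lemma at all, but quotes it from Nash-Williams, whose original argument is an elementary, self-contained combinatorial one, whereas you derive it from polyhedral integrality theory. The steps all check out. By the paper's definition of $\approx$, the requirement $|A\cap P|\approx |P|/n$ is exactly $\lfloor |P|/n\rfloor\le |A\cap P|\le\lceil |P|/n\rceil$, so the lemma is equivalent to the existence of a $0$-$1$ point of your polytope $\Pi$; the uniform point $x\equiv 1/n$ gives feasibility, and a nonempty bounded polyhedron has a vertex. The crux, total unimodularity of the incidence matrix of $\scr A\cup\scr B$, is handled correctly: by laminarity, the members of $R_A$ containing a fixed $s$ form a chain, and any member of $R_A$ properly containing a link of that chain also contains $s$ and hence lies in the chain, so the depths along the chain are exactly $0,1,\dots$; the alternating signs $(-1)^d$ therefore give an $R_A$-contribution in $\{0,1\}$, the opposite parity $(-1)^{d+1}$ on $R_B$ gives a contribution in $\{-1,0\}$, and the total lies in $\{-1,0,1\}$, which is precisely the Ghouila--Houri condition. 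Your remark about coordinating the two sign conventions is exactly the right delicate point: signing both families identically could produce a signed column sum of $\pm 2$. Hoffman--Kruskal with the integral bounds $\lfloor |P|/n\rfloor$, $\lceil |P|/n\rceil$, $0$, $1$ then makes every vertex integral. Comparing the two routes: Nash-Williams' proof needs nothing beyond finite set combinatorics, while yours is shorter and more conceptual but imports two standard theorems of polyhedral combinatorics; in exchange it establishes something stronger --- total unimodularity of the incidence matrix of the union of two laminar families (a classical fact, related to network matrices) --- so an integral selection exists for arbitrary integral lower and upper bounds on the quantities $|A\cap P|$ whenever the fractional relaxation is feasible, not only for the particular bounds $\lfloor |P|/n\rfloor$ and $\lceil |P|/n\rceil$ needed here.
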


\section{proof of Theorem \ref{mainthhypgen1}} \label{proofmainthsec}
\subsection{Inductive construction of $\scr G$ }
Let $\scr F =(V, E, H, \psi, \phi)$. Let $n = \sum\nolimits _{v \in V} (g (v) -1)$. Initially we let $\scr F_0= \scr F$ and $g_0=g$, and we let $\Phi_0$ be the identity function from $V$ into  $V$. Now assume that $\scr F_0=(V_0, E_0, H_0, \psi_0,\phi_0),\ldots,\scr F_i=(V_i,E_i, H_i,\psi_i,\phi_i)$ and $\Phi_0,\ldots,\Phi_i$ have been defined for some $i\geq 0$. Also assume that the simple functions $g_0:V_0\rightarrow\mathbb{N},\ldots, g_i:V_i\rightarrow\mathbb{N}$ have been defined for some $i\geq 0$.  Let $\Psi_i=\Phi_0\ldots\Phi_i$. If $i=n$, we terminate the construction,  letting $\scr G=\scr F_n$ and  $\Psi=\Psi_n$. 

If $i< n$, we can select a vertex $\alpha$ of $\scr F_i$ such that  $g_i(\alpha)\geq 2$. As we will see, $\scr F_{i+1}$ is formed from $\scr F_i$ by splitting off a vertex $v_{i+1}$ from $\alpha$ so that we end up with $\alpha$ and $v_{i+1}$. Let 
\begin{eqnarray}\label{lamAi}
\scr A_i & = &  \{H_{\scr F_i}(\alpha)\} \nonumber\\
&\bigcup & \{H_{\scr F_i(1)}( \alpha),\ldots,H_{\scr F_i(k)}( \alpha)\} \nonumber\\
& \bigcup & \{ H_{\scr F_i(j)}( \alpha, e) : e  \in E_{\scr F_i(j)}( \alpha), 1\leq j\leq k\}, 
\end{eqnarray}
and let 
\begin{eqnarray} \label{lamBi}
\scr B_i&=&\{H_{\scr F_i}(\alpha^t, U): t\geq 1, U \subset V_i\backslash\{\alpha\}\} \nonumber \\
&\bigcup &\{H_{\scr F_i(j)}(\alpha^t, U): t\geq 1, U \subset V_i\backslash\{\alpha\}, 1\leq j \leq k\}.
\end{eqnarray}

It is easy to see that both $\scr A_i$ and $\scr B_i$ are laminar families of subsets of $H(\scr F_i, \alpha)$. 
Therefore, by Lemma \ref{laminarlem}, there exists a subset $Z_i$ of $H(\scr F_i, \alpha)$ such that 
\begin{equation}\label{lamapp1'} |Z_i\cap P|\approx |P|/g_i(\alpha),  \mbox{ for every  } P\in \scr A_i \cup \scr B_i. \end{equation}
Let $v_{i+1}$ be a vertex which does not belong to $V_i$ and let $V_{i+1}=V_i\cup \{v_{i+1}\}$.
Let $\Phi_{i+1}$ be the function from $V_{i+1}$ onto $V_i$ such that $\Phi_{i+1}(v)=v$ for every $v\in V_i$ and $\Phi_{i+1}(v_{i+1})=\alpha$. Let $\scr F_{i+1}$ be the detachment of $\scr F_i$ under $\Phi_{i+1}$ 
such that $V(\scr F_{i+1})=V_{i+1}$, and
 \begin{equation}\label{hinge1} 
 H_{\scr F_{i+1}}(v_{i+1})=Z_i, H_{\scr F_{i+1}}(\alpha)=H_{\scr F_i}( \alpha)\backslash Z_i. 
 \end{equation}
In fact, $\scr F_{i+1}$ is obtained from $\scr F_i$ by splitting $\alpha$ into two vertices $\alpha$ and  $v_{i+1}$ in such a way that hinges which were incident with $\alpha$ in $\scr F_i$ become incident in $\scr F_{i+1}$ with $\alpha$ or $v_{i+1}$ according as they do not or do belong to $Z_i$, respectively. 
Obviously, $\Psi_i$ is an amalgamation function from $\scr F_{i+1}$ into $\scr F_i$. Let $g_{i+1}$ be the function from $V_{i+1}$ into $\mathbb N$, such that $g_{i+1}(v_{i+1})=1, g_{i+1}(\alpha)=g_i(\alpha)-1$, and $g_{i+1}(v)=g_i(v)$ for every $v\in V_i\backslash\{\alpha\}$. This finishes the construction of $\scr F_{i+1}$.   
\subsection{Relations between $\scr F_{i+1}$ and $\scr F_i$}
The hypergraph $\scr F_{i+1}$, satisfies the following conditions:
\begin{itemize}
\item [\textup{(B1)}] $d_{\scr F_{i+1}}(\alpha) \approx d_{\scr F_{i}}(\alpha)g_{i+1}(\alpha)/g_{i}(\alpha);$
\item [\textup{(B2)}] $d_{\scr F_{i+1}}(v_{i+1}) \approx d_{\scr F_{i}}(\alpha)/g_{i}(\alpha);$
\item [\textup{(B3)}] $m_{\scr F_{i+1}}(v_{i+1}^s,\alpha^t, U)=0$ for $s\geq 2$, and $t\geq 0;$
\item [\textup{(B4)}] $m_{\scr F_{i+1}}(\alpha^t, U) \approx m_{\scr F_{i}}(\alpha^t,U)(g_i(\alpha)-t)/g_i(\alpha)$ for each $U\subset V_i\backslash\{\alpha\}$, and $g_i(\alpha)\geq t\geq 1;$
\item [\textup{(B5)}] $m_{\scr F_{i+1}}(\alpha^t,v_{i+1}, U) \approx (t+1) m_{\scr F_{i}}(\alpha^{t+1}, U)/g_i(\alpha)$ for each $U\subset V_i\backslash\{\alpha\}$, and $t\geq 0.$
\end{itemize}
\begin{proof} Since $H_{\scr F_{i}}(\alpha)\in \scr A_i$, from (\ref{hinge1}) it follows that
\begin{eqnarray*}
d_{\scr F_{i+1}}(v_{i+1})& = & |H_{\scr F_{i+1}}(v_{i+1})|=|Z_i|=|Z_i\cap H_{\scr F_{i}}(\alpha)|\\
& \approx & |H_{\scr F_{i}}(\alpha)|/g_i(\alpha)=d_{\scr F_{i}}(\alpha)/g_{i}(\alpha),\\
d_{\scr F_{i+1}}(\alpha) & = &   |H_{\scr F_{i+1}}(\alpha)|= |H_{\scr F_{i}}(\alpha)|-|Z_i| \nonumber \\
& \approx&  d_{\scr F_{i}}(\alpha)-d_{\scr F_{i}}(\alpha)/g_{i}(\alpha)=(g_i(\alpha)-1)d_{\scr F_{i}}(\alpha)/g_{i}(\alpha)\nonumber \\
&= &  d_{\scr F_{i}}(\alpha)g_{i+1}(\alpha)/g_{i}(\alpha).
\end{eqnarray*}
This proves (B1) and (B2).

If $t\geq 1, U\subset V_i\backslash\{\alpha\}$, and $e\in  E_{\scr F_i}(\alpha^t, U)$, then for some $j$, $1\leq j\leq k$, $H_{\scr F_i(j)}( \alpha,e)\in \scr A_i$, so 
$$\left |Z_i\cap H_{\scr F_i(j)}( \alpha,e)\right |\approx |H_{\scr F_i(j)}( \alpha,e)|/g_i(\alpha)=t/g_i(\alpha)\leq 1,$$
where the inequality implies from the fact that $g_i$ is simple. 
 Therefore either $|Z_i\cap H_{\scr F_i(j)}( \alpha,e)|=1$ and consequently  
 $e\in E_{\scr F_{i+1}}(\alpha^{t-1}, v_{i+1}, U$) or $Z_i\cap H_{\scr F_i(j)}( \alpha,e)=\varnothing$ and consequently $e\in E _{\scr F_{i+1}}(\alpha^t, U)$. Therefore $$m_{\scr F_{i+1}}(v_{i+1}^s,\alpha^r, U)=0,$$ 
 for $r\geq 1$, and $s\geq 2$. This proves (B3).  Moreover, since $H_{\scr F_i}(\alpha^t, U)\in \scr B_i$, we have 
\begin{eqnarray*}
m_{\scr F_{i+1}}(\alpha^{t-1}, v_{i+1}, U)& = & |Z_i\cap H_{\scr F_i}(\alpha^t, U)|\approx |H_{\scr F_i}(\alpha^t, U)| /g_i(\alpha)=t m_{\scr F_{i}}(\alpha^t, U)/g_{i}(\alpha),\\
m_{\scr F_{i+1}}(\alpha^t,U) & \approx &  m_{\scr F_{i}}(\alpha^t,U)-|H_{\scr F_i}(\alpha^t, U)|/g_{i}(\alpha)=m_{\scr F_{i}}(\alpha^t,U)-t m_{\scr F_{i}}(\alpha^t,U)/g_{i}(\alpha)\nonumber \\
&= &  m_{\scr F_{i}}(\alpha^t, U)(g_{i}(\alpha)-t)/g_{i}(\alpha).
\end{eqnarray*}
This proves (B4) and (B5). 
\end{proof}
Let us fix $j\in\{1,\dots,k\}$. It is enough to replace $\scr F_i$ with $\scr F_i(j)$ in the statement and the proof of (B1)--(B5) to obtain companion conditions, say (C1)--(C5) 
for each color class.

\subsection{Relations between $\scr F_{i}$ and $\scr F$}
Recall that $\Psi_i=\Phi_0\ldots\Phi_i$, that $\Phi_0:V\rightarrow V$, and that $\Phi_i:V_i\rightarrow V_{i-1}$ for $i>0$. Therefore $\Psi_i:V_i\rightarrow V$ and thus $\Psi_i^{-1}:V\rightarrow V_i$. Now we use (B1)--(B5) to prove that the hypergraph $\scr F_{i}$ satisfies the following conditions for $0\leq i \leq n:$ 
\begin{itemize}
\item [\textup{(D1)}]  $d_{\scr F_{i}}(v)/g_{i}(v) \approx d_{\scr F}(u)/g(u)$ for each $u\in V$ and each $v\in \Psi_i^{-1}(u);$
\item [\textup{(D2)}] $m_{\scr F_{i}}(u_1^{a_1},U_1,\dots,u_r^{a_r},U_r)/\Pi_{j=1}^r\binom {g_i(u_j)}{a_j}  \approx m_\scr F(u_1^{m_1},\dots,u_r^{m_r})/\Pi_{j=1}^r\binom {g(u_j)}{m_j} $ for  distinct vertices $u_1,\dots,u_r\in V$, $a_j\geq 0$, $U_j\subset \Psi_i^{-1}(u_j)\backslash\{u_j\}$ with $1\leq m_j=a_j+|U_j| \leq g(u_j)$, $1\leq j \leq r$ if $g_i(u_j)\geq a_j$, $1\leq j \leq r$. 
\end{itemize}
\begin{proof} The proof is by induction. Recall that $\scr F_0=\scr F$, and $g_0(u)= g(u)$ for $u\in V$. Thus, (D1) and (D2) are trivial for $i=0$. 
Now we will show that if $\scr F_i$ satisfies the conditions (D1) and (D2) for some $i< n$, then $\scr F_{i+1}$  satisfies these conditions by replacing $i$ with $i+1$; we denote the corresponding conditions for $\scr F_{i+1}$ by (D1)$'$ and (D2)$'$. 

Let $u\in V$. If $g_{i+1}(u)=g_i(u)$, then (D1)$'$ is obviously true. So we just check (D1)$'$ in the case where $u=\alpha$. By (B1) and (D1) we have $d_{\scr F_{i+1}}(\alpha)/g_{i+1}(\alpha) \approx d_{\scr F_{i}}(\alpha)/g_{i}(\alpha)\approx d_{\scr F}(\alpha)/g(\alpha).$
 Moreover, from (B2) and (D1) it follows that $d_{\scr F_{i+1}}(v_{i+1}) \approx d_{\scr F_{i}}(\alpha)/g_{i}(\alpha)\approx d_{\scr F}(\alpha)/g(\alpha).$
 Since in forming $\scr F_{i+1}$ no edge is detached from $v_r$ for each $v_r \in \Psi_i^{-1}(\alpha)\backslash\{\alpha\}$, we have $d_{\scr F_{i+1}}(v_{r}) =d_{\scr F_{i}}(v_{r})$. Therefore $d_{\scr F_{i+1}}(v_{r})=d_{\scr F_{i}}(v_{r})\approx d_{\scr F}(\alpha)/g(\alpha)$
for each $v_r \in \Psi_i^{-1}(\alpha)\backslash\{\alpha\}$. This proves (D1)$'$. 
 Let $u_1,\dots,u_r$ be distinct vertices in $V$. If $g_{i+1}(u_j)=g_i(u_j)$ for $1\leq j\leq r$, then (D2)$'$ is clearly true. Therefore, in order to prove (D2)$'$, without loss of generality we may assume that $g_{i+1}(u_1)=g_i(u_1)-1$ (so $\alpha=u_1$  and $v_{i+1}\in \Psi_i^{-1}(u_1)$). First, note that for  integers $a, b$ we always have $(a-b)\binom{a}{b}=a\binom{a-1}{b} =(b+1)\binom{a}{b+1}$. 
If $v_{i+1}\notin U_1$, we have
\begin{eqnarray*}
\frac{m_{\scr F_{i+1}}(u_1^{a_1},U_1,\dots,u_r^{a_r},U_r)}{\Pi_{j=1}^r\binom {g_{i+1}(u_j)}{a_j}} & \mathop\approx \limits^{\textup{(B4)}} &\frac{m_{\scr F_{i}}(u_1^{a_1},U_1,\dots,u_r^{a_r},U_r)(g_i(u_1)-a_1)/g_i(u_1)}{\binom{g_i(u_1)-1}{a_1}\Pi_{j=2}^r\binom {g_i(u_j)}{a_j}}\\
&=&\frac{m_{\scr F_{i}}(u_1^{a_1},U_1,\dots,u_r^{a_r},U_r)(g_i(u_1)-a_1)/g_i(u_1)}{(g_i(u_1)-a_1)/g_i(u_1)\binom{g_i(u_1)}{a_1}\Pi_{j=2}^r\binom {g_i(u_j)}{a_j}}\\
&=&\frac{m_{\scr F_{i}}(u_1^{a_1},U_1,\dots,u_r^{a_r},U_r)}{\Pi_{j=1}^r\binom {g_i(u_j)}{a_j}}\\
&\mathop \approx \limits^{\textup{(D2)}}&   \frac{m_\scr F(u_1^{m_1},\dots,u_r^{m_r})}{\Pi_{j=1}^r\binom {g(u_j)}{m_j}}.
\end{eqnarray*}
If $v_{i+1}\in U_1$, we have
\begin{eqnarray*}
\frac{m_{\scr F_{i+1}}(u_1^{a_1},U_1,\dots,u_r^{a_r},U_r)}{\Pi_{j=1}^r\binom {g_{i+1}(u_j)}{a_j}} & \mathop\approx \limits^{\textup{(B5)}} & \frac{m_{\scr F_{i}}(u_1^{a_1+1},U_1\backslash\{v_{i+1}\},\dots,u_r^{a_r},U_r)(a_1+1)/g_i(u_1)}{\binom{g_i(u_1)-1}{a_1}\Pi_{j=2}^r\binom {g_i(u_j)}{a_j}}\\
&=&\frac{m_{\scr F_{i}}(u_1^{a_1+1},U_1\backslash\{v_{i+1}\},\dots,u_r^{a_r},U_r)}{g_i(u_1)/(a_1+1)\binom{g_i(u_1)-1}{a_1}\Pi_{j=2}^r\binom {g_i(u_j)}{a_j}}\\
&=&\frac{m_{\scr F_{i}}(u_1^{a_1+1},U_1\backslash\{v_{i+1}\},\dots,u_r^{a_r},U_r)}{\binom{g_i(u_1)}{a_1+1}\Pi_{j=2}^r\binom {g_i(u_j)}{a_j}}\\
&\mathop\approx \limits^{\textup{(D2)}}& \frac{m_\scr F(u_1^{m_1},\dots,u_r^{m_r})}{\Pi_{j=1}^r\binom {g(u_j)}{m_j}}.
\end{eqnarray*}
This proves (D2)$'$. 
\end{proof}
Let us fix $j\in\{1,\dots,k\}$. It is enough to replace $\scr F$ with $\scr F(j)$, $\scr F_i$ with $\scr F_i(j)$, $\scr F_{i+1}$ with $\scr F_{i+1}(j)$, and (B$i$) with (C$i$) for  $i=1,2,4,5$, in the statement and the proof of (D1) and (D2) to obtain companion conditions, say (E1) and (E2)  for each color class. 
\subsection{$\scr G$ satisfies (A1)--(A4) }
Recall that $\scr G=\scr F_n$ and  $g_n(u)=1$ for every $u\in V$, therefore when $i=n$, (D1) implies (A1). Moreover, if we let $i=n$ in (D2), we have $a_j\in \{0,1\}$ for $1\leq j\leq r$ and thus $\Pi_{j=1}^r\binom {g_i(u_j)}{a_j}=\Pi_{j=1}^r\binom {1}{a_j}=1$. This proves (A3). By a similar argument, one can prove (A2) and (A4), and this completes the proof of Theorem \ref{mainthhypgen1}. \qed

\section{corollaries} \label{corrfacgen}
For a matrix $A$, 
 let $A_j$ denote the $j^{th}$ column  of $A$, and let $s(A)$ denote the sum of all the elements of $A$.  
Let $R=[r_1\dots r_k]^T$ (or $R^T=[r_i]_{1\times k}$), $\Lambda=[\lambda_1\dots\lambda_m]^T$ and $H=[h_1\dots h_m]^T$ be three column vectors with $r_i,\lambda_i\in {\mathbb N}$, and $h_i\in \{1,\dots,n\}$ for $1\leq i\leq m$, such that $h_1\dots, h_m$ are distinct.  
Let $\Lambda K_n^H$ denote a hypergraph with vertex set $V$, $|V|=n$, such that there are $\lambda_i$ edges of size $h_i$ incident with every $h_i$ vertices  for $1\leq i\leq m$.
A hypergraph $\scr G$ 
is said to be \textit{$k$-regular}  if every vertex has degree $k$. A \textit{$k$-factor} of $\scr G$ is a $k$-regular spanning sub-hypergraph of $\scr G$. 
 An \textit{$R$-factorization} is a partition (decomposition) $\{F_1,\ldots, F_k\}$ of $E(\scr G)$ in which $F_i$ is an $r_i$-factor for $1\leq i\leq  k$. 
 Notice that $\Lambda K_n^H$ is $\sum_{i=1}^m  \lambda_i\binom{n-1}{h_i-1}$-regular. 
We show that the obvious necessary conditions for the existence of an $R$-factorization of $\Lambda K_n^H$, are also sufficient.

\begin{theorem} \label{r1rkfacknhlam}
$\Lambda K_n^H$ is $R$-factorizable if and only if  
$s(R)=\sum_{i=1}^m  \lambda_i\binom{n-1}{h_i-1}$, 
and there exists a non-negative integer matrix $A=[a_{ij}]_{k \times m}$ such that $AH=nR$, and $s(A_j)=\lambda_j\binom{n}{h_j}$ for $1\leq j\leq m.$
\end{theorem}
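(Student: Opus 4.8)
The plan is to treat necessity by direct double counting and sufficiency by amalgamating the sought factorization onto a single vertex and then detaching it with Theorem \ref{mainthhypgen1}.

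For necessity, I would start from a hypothetical $R$-factorization $\{F_1,\dots,F_k\}$ and let $a_{ij}$ be the number of edges of size $h_j$ lying in $F_i$, setting $A=[a_{ij}]$. The degree of every vertex of $\Lambda K_n^H$ is $\sum_{i=1}^m\lambda_i\binom{n-1}{h_i-1}$, and across the factors this degree is $\sum_{i=1}^k r_i=s(R)$, giving the first identity. Counting size-$h_j$ edges two ways gives $s(A_j)=\lambda_j\binom{n}{h_j}$, and summing degrees inside the $r_i$-regular factor $F_i$ gives $nr_i=\sum_j h_j a_{ij}$, i.e. $AH=nR$.

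For sufficiency, I would build a $k$-edge-colored hypergraph $\scr F$ on a single vertex $w$: for each pair $(i,j)$ include $a_{ij}$ edges colored $i$, each attached to $w$ by exactly $h_j$ hinges. Then $m_{\scr F}(w^{h_j})=s(A_j)=\lambda_j\binom{n}{h_j}$ and $m_{\scr F(i)}(w^{h_j})=a_{ij}$, with no edges of any other type. Setting $g(w)=n$ makes $g$ simple, since $|H_{\scr F}(w,e)|=h_j\le n$ for every edge $e$. Using $h_j\binom{n}{h_j}=n\binom{n-1}{h_j-1}$ I would compute $d_{\scr F}(w)=\sum_j h_j\lambda_j\binom{n}{h_j}=n\,s(R)$ and, via $AH=nR$, $d_{\scr F(i)}(w)=\sum_j h_j a_{ij}=nr_i$. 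Applying Theorem \ref{mainthhypgen1} to $(\scr F,g)$ then yields a simple $g$-detachment $\scr G$ with $\Psi^{-1}(w)=\{v_1,\dots,v_n\}$, from which I would read off the factorization through (A1)--(A3).

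The key observation that makes everything collapse is that each target ratio on the right of (A1)--(A3) is forced to be an integer by the hypotheses, so since $x\approx c$ means $x=c$ when $c\in\mathbb{Z}$, the approximate conclusions become exact: (A1) gives $d_{\scr G}(v)=s(R)$, (A2) gives $d_{\scr G(i)}(v)=r_i$ so that each $\scr G(i)$ is $r_i$-regular, and (A3) applied with $r=1$, $u_1=w$ to any $h_j$-subset $U_1$ gives $m_{\scr G}(U_1)=\lambda_j\binom{n}{h_j}/\binom{n}{h_j}=\lambda_j$ (and $0$ for sizes outside $\{h_1,\dots,h_m\}$). Finally, since $\scr G$ is simple each edge meets a vertex at most once, so an edge carried by $h_j$ hinges spans exactly $h_j$ distinct vertices; this identifies $\scr G$ with $\Lambda K_n^H$ and exhibits $\{\scr G(1),\dots,\scr G(k)\}$ as the required $R$-factorization. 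I expect the main obstacle to be precisely this last bookkeeping: verifying that the three hypotheses pin the relevant ratios to integers (so that $\approx$ upgrades to $=$), and that simplicity of $\scr G$ prevents any detached edge from degenerating to a smaller size.
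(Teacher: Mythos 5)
Your proposal is correct and follows essentially the same route as the paper: necessity by the same double counting, and sufficiency by amalgamating everything onto a single vertex, coloring the edge multiplicities according to $A$, and applying Theorem \ref{mainthhypgen1} with $g(v)=n$ to read the factorization off conditions (A2) and (A3). Your explicit remarks that $x\approx c$ forces $x=c$ when $c$ is an integer, and that simplicity of the detachment prevents edges from degenerating to smaller sizes, are points the paper uses implicitly, so they strengthen rather than alter the argument.
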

\begin{proof}
To prove the necessity, suppose that $\Lambda K_n^H$ is $R$-factorizable. Since each $r_i$-factor is an $r_i$-regular spanning sub-hypergraph for $1\leq i\leq k$, and $\Lambda K_n^H$ is $\sum_{i=1}^m \lambda_i \binom{n-1}{h_i-1}$-regular, we must have $s(R)=\sum_{i=1}^k r_i=\sum_{i=1}^m \lambda_i \binom{n-1}{h_i-1}$. Let $a_{ij}$ be  the number of edges (counting multiplicities) of size $h_j$ contributing to the $i^{th}$ factor for $1\leq i\leq k$, $1\leq j\leq m$. 
Since   for $1\leq j\leq m$, each edge of size $h_j$ contributes $h_j$ to the the sum of the degrees of the vertices in an $r_i$-factor for $1\leq i\leq k$,  we must have  $\sum_{j=1}^m a_{ij} h_j=nr_i$ for  $1\leq i\leq k$ and $\sum_{i=1}^k a_{ij} =\lambda_j\binom{n}{h_j}$  for $1\leq j\leq m$. 

To prove the sufficiency, let $\scr F$ be a hypergraph consisting of a single vertex $v$ 
with  $m_{\scr F}(v^{h_j})=\lambda_j \binom{n}{h_j}$ for $1\leq j\leq m$. Note that $\scr F$ is an amalgamation of $\Lambda K_n^H$. 
Now we color the edges of $\scr F$ so that $m_{\scr F(i)}(v^{h_j})=a_{ij}$ for $1\leq i \leq k$, $1\leq j \leq m$. This can be done, because:
\begin{equation*}
\sum_{i=1}^k m_{\scr F(i)}(v^{h_j}) =\sum_{i=1}^k a_{ij}= 
\lambda_j\binom{n}{h_j}=m_{\scr F}(v^{h_j}) \mbox {\quad for } 1\leq j\leq m. 
\end{equation*}
Moreover,  
$$d_{\scr F(i)}(v)=\sum_{j=1}^m a_{ij} h_j=nr_i                        \mbox{ \quad for } 1\leq i \leq k.
$$ 
Let $g:V(\scr F)\rightarrow \mathbb N$ be a function so that $g(v)=n$. Since for $1\leq i\leq m$, $h_i\leq n$,  $g$ is simple.   
By Theorem \ref{mainthhypgen1}, there exists a simple $g$-detachment $\scr G$ of $\scr F$ with $n$ vertices, say $v_1,\ldots, v_n$ such that by (A2), $d_{\scr G(i)}(v_j) \approx d_{\scr F(i)}(v)/g(v)=nr_i/n=r_i$ for $1\leq i\leq k$, $1\leq j\leq n$, and by (A3), for each $U\subset \{v_1,\dots,v_n\}$ with $|U|=h_j$, $m_\scr G(U) \approx m_\scr F(v^{h_j})/\binom {n}{h_j} =\lambda_j\binom{n}{h_j}/\binom {n}{h_j}=\lambda_j$ for $1\leq j\leq m$. Therefore $\scr G\cong  \Lambda K_n^H$, and the $i^{th}$ color class induces an $r_i$-factor for $1\leq i\leq k$. 
\end{proof}
In particular, if $m=1$, $h:=h_1$, $\lambda_1=1$,  $r:=r_1=\dots =r_k$, then Theorem \ref{r1rkfacknhlam} implies Baranyai's theorem: the complete $h$-uniform hypergraph $K_n^h$ is $r$-factorizable if and only if $h\divides rn$ and $r\divides \binom{n-1}{h-1}$.

Now let $h_i\geq 2$ for $1\leq i\leq m$, and  
let $\Lambda K^H_{p_1,\dots,p_n}$ be a hypergraph with vertex partition $\{V_1,\dots, V_n\}$, $|V_i|=p_i$ for $1\leq i\leq n$ such that there are $\lambda_i$ edges of size $h_i$ incident with every $h_i$ vertices, at most one vertex from each part for $1\leq i\leq m$ (so no edge is incident with more than one vertex of a part). If $p_1=\dots=p_n:=p$, we denote $\Lambda K^H_{p_1,\dots,p_n}$ by $\Lambda K^H_{n\times p}$. 
 \begin{theorem} \label{r1rkfacknphlam}
 $\Lambda K_{p_1,\dots, p_n}^{H}$ is $R$-factorizable if and only if $p_1=\dots=p_n:=p$, 
$s(R)=\sum_{i=1}^m  \lambda_i\binom{n-1}{h_i-1}p^{h_i-1}$, 
and there exists a non-negative integer matrix $A=[a_{ij}]_{k \times m}$ such that $AH=npR$, and $s(A_j)=\lambda_j\binom{n}{h_j}p^{h_j}$ for $1\leq j\leq m.$
\end{theorem}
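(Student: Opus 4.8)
The plan is to treat the two directions separately, with the sufficiency reducing to Theorem \ref{r1rkfacknhlam} together with a single further application of the detachment theorem.

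For the necessity, I would first note that $R$-factorizability forces $\Lambda K_{p_1,\dots,p_n}^{H}$ to be regular (a union of regular factors is regular), and then that regularity forces $p_1=\dots=p_n$. The degree of a vertex $v\in V_i$ equals $\sum_{j=1}^m \lambda_j e_{h_j-1}(\{p_\ell:\ell\neq i\})$, where $e_s$ denotes the elementary symmetric polynomial of degree $s$. Comparing two parts $i,i'$ and using $e_{s}(\{p_\ell:\ell\neq i\})-e_{s}(\{p_\ell:\ell\neq i'\})=(p_{i'}-p_i)\,e_{s-1}(\{p_\ell:\ell\neq i,i'\})$, the difference of the two degrees is $(p_{i'}-p_i)\sum_j\lambda_j e_{h_j-2}(\{p_\ell:\ell\neq i,i'\})$; since every $h_j\geq 2$ the coefficient is strictly positive, so equal degrees force $p_i=p_{i'}$. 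Writing $p$ for the common value, the common degree is $s(R)=\sum_j\lambda_j\binom{n-1}{h_j-1}p^{h_j-1}$. Finally, letting $a_{ij}$ be the number of size-$h_j$ edges in the $i^{th}$ factor, counting the degree-sum within each factor gives $\sum_j a_{ij}h_j=npr_i$ (so $AH=npR$), while counting all size-$h_j$ edges gives $\sum_i a_{ij}=\lambda_j\binom{n}{h_j}p^{h_j}=s(A_j)$.

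For the sufficiency, set $R':=pR$ (that is $r'_i:=pr_i$) and $\lambda'_j:=\lambda_j p^{h_j}$, and consider the single-part hypergraph $\Lambda' K_n^H$ with $\Lambda'=[\lambda'_1\dots\lambda'_m]^T$. I would check that the hypotheses of Theorem \ref{r1rkfacknhlam} hold for $\Lambda' K_n^H$ with factor vector $R'$ and the \emph{same} matrix $A$: indeed $s(R')=p\,s(R)=\sum_j\lambda'_j\binom{n-1}{h_j-1}$, while $AH=npR=nR'$ and $s(A_j)=\lambda'_j\binom{n}{h_j}$. Hence Theorem \ref{r1rkfacknhlam} yields an $R'$-factorization of $\Lambda' K_n^H$, i.e. a partition of its edges into $k$ classes with the $i^{th}$ class $pr_i$-regular. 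Regarding this factorization as a $k$-edge-coloring of $\scr F:=\Lambda' K_n^H$ (so $d_{\scr F(i)}(w)=pr_i$ for every vertex $w$), I take $g\equiv p$, which is simple since each edge of $\scr F$ meets each of its vertices through a single hinge, and apply Theorem \ref{mainthhypgen1} to obtain a simple $g$-detachment $\scr G$ on $np$ vertices. By (A2), $d_{\scr G(i)}(v)\approx pr_i/p=r_i$, so each color class is exactly $r_i$-regular; by (A3), each transversal $h_j$-set of detached vertices has multiplicity $\approx\lambda'_j/p^{h_j}=\lambda_j$, hence exactly $\lambda_j$.

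The structural point I expect to be the heart of the argument is that the detachment automatically respects the partition: since each edge $e$ of $\scr F$ is incident with each of its vertices $w$ through exactly one hinge, in a \emph{simple} detachment that hinge passes to a single vertex of $\Psi^{-1}(w)$, so $e$ meets at most one vertex of each part $\Psi^{-1}(w)$. Combined with the exact multiplicity from (A3), this identifies $\scr G$ with $\Lambda K_{n\times p}^H$ and exhibits the required $R$-factorization. The two places needing care are thus the necessity of $p_1=\dots=p_n$ — where the hypothesis $h_j\geq 2$ is essential so that the symmetric-polynomial coefficient does not vanish — and the observation that simplicity of the detachment is precisely what enforces the ``at most one vertex per part'' condition; everything else is the routine binomial bookkeeping needed to match the hypotheses of Theorem \ref{r1rkfacknhlam}.
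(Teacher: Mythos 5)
Your proposal is correct and follows essentially the same route as the paper: necessity via the regularity argument forcing $p_1=\dots=p_n$ (your elementary-symmetric-polynomial identity is exactly the paper's explicit sum manipulation) plus the edge-counting for $A$, and sufficiency by amalgamating to $\Lambda^p K_n^H$ with $\lambda'_j=\lambda_j p^{h_j}$, invoking Theorem \ref{r1rkfacknhlam} to get a $pR$-factorization, and then detaching with $g\equiv p$ via Theorem \ref{mainthhypgen1}, using (A2) and (A3). Your write-up is in fact slightly more explicit than the paper's in verifying the hypotheses of Theorem \ref{r1rkfacknhlam} and in noting why each detached edge meets at most one vertex per part (which holds because each edge of $\scr F$ has exactly one hinge at each incident vertex, regardless of simplicity).
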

\begin{proof}
To prove the necessity, suppose that $\Lambda K_{p_1,\dots, p_n}^{H}$ is $R$-factorizable (so it is regular). Let $u$ and $v$ be two vertices from two different parts, say $a^{th}$ and $b^{th}$ parts, respectively. Since $d(u)=d(v)$, we have 
\begin{equation*}\label{regularppart} \begin{split}
\sum_{1\leq j\leq m}\lambda_j \sum\nolimits_{\scriptstyle 1\leq i_1<\dots<i_{_{h_j-1}}\leq n \hfill \atop  \scriptstyle a\notin\{ i_1,\dots,i_{_{h_j-1}}\} \hfill}  p_{i_1}\dots p_{i_{h_j-1}}=   
\sum_{1\leq j\leq m}\lambda_j \sum\nolimits_{\scriptstyle 1\leq i_1<\dots<i_{_{h_j-1}}\leq n \hfill \atop  \scriptstyle b\notin\{ i_1,\dots,i_{_{h_j-1}}\} \hfill}  p_{i_1}\dots p_{i_{h_j-1}}
&\iff \\
\sum_{1\leq j\leq m}\lambda_j \Big( \sum\nolimits_{\scriptstyle 1\leq i_1<\dots<i_{_{h_j-1}}\leq n \hfill \atop  \scriptstyle a\notin\{ i_1,\dots,i_{_{h_j-1}}\} \hfill}  p_{i_1}\dots p_{i_{h_j-1}}-  \sum\nolimits_{\scriptstyle 1\leq i_1<\dots<i_{_{h_j-1}}\leq n \hfill \atop  \scriptstyle b\notin\{ i_1,\dots,i_{_{h_j-1}}\} \hfill}  p_{i_1}\dots p_{i_{h_j-1}}\Big)=0
&\iff \\
\sum_{1\leq j\leq m}\lambda_j \Big(p_b\sum\nolimits_{\scriptstyle 1\leq i_1<\dots<i_{_{h_j-2}}\leq n}  p_{i_1}\dots p_{i_{h_j-2}}-  p_a\sum\nolimits_{\scriptstyle 1\leq i_1<\dots<i_{_{h_j-2}}\leq n}  p_{i_1}\dots p_{i_{h_j-2}}\Big)=0
&\iff \\
(p_b-p_a)\sum_{1\leq j\leq m}\lambda_j \sum\nolimits_{\scriptstyle 1\leq i_1<\dots<i_{_{h_j-2}}\leq n}  p_{i_1}\dots p_{i_{h_j-2}}
= 
0 &\iff \\
p_b=p_a.&
 \end{split} \end{equation*}
Therefore, $p_1=\dots=p_n:=p$.  So $\Lambda K^H_{n\times p}$ is $\sum_{i=1}^m  \lambda_i\binom{n-1}{h_i-1}p^{h_i-1}$-regular, and we  must have $s(R)=\sum_{i=1}^k r_i=\sum_{i=1}^m  \lambda_i\binom{n-1}{h_i-1}p^{h_i-1}$. 
Moreover,  there must exist non-negative integers $a_{ij}$, $1\leq i\leq k$, $1\leq j\leq m$, such that $\sum_{j=1}^m a_{ij} h_j=npr_i$ for $1\leq i\leq k$ and $\sum_{i=1}^k a_{ij} =\lambda_j\binom{n}{h_j}p^{h_j}$  for $1\leq j\leq m$. We note that $a_{ij}$ is in fact the number of edges (counting multiplicities) of size $h_j$ contributing to the $i^{th}$ factor. 

To prove the sufficiency, let $\Lambda^p=[p^{h_i}\lambda_i]_{1\times m}^T$, and let $\scr F=\Lambda^p K^H_{n}$ with vertex set $V=\{v_1,\dots,v_n\}$. 
Notice  that $\scr F$ is an amalgamation of $\Lambda K^H_{n\times p}$. By Theorem \ref{r1rkfacknhlam}, $\scr F$ is  $pR$-factorizable. 
Therefore, we can color the edges of $\scr F$ so that 
$$d_{\scr F(i)}(v)=pr_i \mbox{ for } v\in V, 1\leq i \leq k.$$ 
Let $g:V\rightarrow \mathbb N$ be a function so that $g(v)=p$ for $v\in V$. Since $p\geq 1$,  $g$ is simple.   
By Theorem \ref{mainthhypgen1}, there exists a simple $g$-detachment $\scr G$ of $\scr F$ with $np$ vertices, say $v_i$ is detached to $v_{i1},\dots, v_{ip}$ for $1\leq i\leq n$,  such that by (A2), $d_{\scr G(i)}(v_{ab}) \approx d_{\scr F(i)}(v_a)/g(v_a)=pr_i/p=r_i$ for $1\leq i\leq k$, $1\leq a\leq n$, $1\leq b\leq p$, and by (A3), $m_\scr G(v_{a_1b_1},\dots, v_{a_{h_j}b_{h_j}}) \approx m_\scr F(v_{a_1},\dots, v_{a_{h_j}})/p^{h_j} =p^{h_j}\lambda_j/p^{h_j}=\lambda_j$ for $1\leq j\leq m$, $1\leq a_1<\dots<a_{h_j}\leq n$, $1\leq b_1,\dots,b_{h_j}\leq p$.  Therefore $\scr G\cong  \Lambda K^H_{n\times p}$, and the $i^{th}$ color class induces an $r_i$-factor for $1\leq i\leq k$. 
\end{proof}
In particular, if $m=1$, $h:=h_1$, $\lambda_1=1$,  $r:=r_1=\dots =r_k$, then Theorem \ref{r1rkfacknphlam} implies another one of Baranyai's theorems: the complete $h$-uniform $n$-partite hypergraph $K_{n\times p}^h$ is $r$-factorizable if and only if $h\divides npr$ and $r\divides \binom{n-1}{h-1}p^{h-1}$.

Let $J_k^T=[1\dots1]_{1\times k}$. For two column vectors $Q=[q_1\dots q_k]^T$, $R=[r_1\dots r_k]^T$, if $q_i\leq r_i$ for $1\leq i\leq k$, we say that  $Q\leq R$. 
For a  hypergraph $\scr G$, a $(q,r)$-factor is a spanning sub-hypergraph in which 
$$
q\leq d(v) \leq r \mbox { for each } v\in V(\scr G).
$$
A \textit{$(Q,R)$-factorization} is a partition $\{F_1,\ldots, F_k\}$ of $E(\scr G)$ in which $F_i$ is a $(q_i,r_i)$-factor for $1\leq i\leq  k$.  
An \textit{almost $k$-factor} of $\scr G$ is $(k-1,k)$-factor.  An \textit{almost $R$-factorization} is an $(R-J_k,R)$-factorization. 
The proof of the following  theorems are very similar to those of Theorem \ref{r1rkfacknhlam} and  \ref{r1rkfacknphlam}.  

\begin{theorem} \label{qrfacknhlam}
$\Lambda K_n^H$ is  $(Q,R)$-factorizable  if and only if  $s(Q)\leq \sum_{i=1}^m  \lambda_i\binom{n-1}{h_i-1}\leq s(R)$, and there exists a non-negative integer matrix $A=[a_{ij}]_{k \times m}$ such that $nQ\leq AH\leq nR$, and $s(A_j)=\lambda_j\binom{n}{h_j}$ for $1\leq j\leq m$. 
\end{theorem}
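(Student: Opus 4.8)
The plan is to mirror the proofs of Theorems \ref{r1rkfacknhlam} and \ref{r1rkfacknphlam} almost verbatim, replacing the exact regularity requirements by the two-sided inequalities that define a $(Q,R)$-factorization, and then invoking the detachment theorem \ref{mainthhypgen1}. For the \emph{necessity}, suppose $\Lambda K_n^H$ admits a $(Q,R)$-factorization $\{F_1,\dots,F_k\}$. Since $\Lambda K_n^H$ is $\sum_{i=1}^m\lambda_i\binom{n-1}{h_i-1}$-regular and each $F_i$ is a spanning $(q_i,r_i)$-factor, summing the degree bound $q_i\le d_{F_i}(v)\le r_i$ over the $k$ factors at a fixed vertex $v$ gives $s(Q)\le\sum_{i=1}^k d_{F_i}(v)=d_{\Lambda K_n^H}(v)=\sum_{i=1}^m\lambda_i\binom{n-1}{h_i-1}\le s(R)$. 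To produce the matrix $A$, set $a_{ij}$ to be the number of edges (with multiplicity) of size $h_j$ assigned to factor $F_i$; counting incidences, each such edge contributes $h_j$ to the degree sum of $F_i$, so $\sum_{j=1}^m a_{ij}h_j=\sum_{v}d_{F_i}(v)$, and the two-sided bound $nq_i\le\sum_v d_{F_i}(v)\le nr_i$ yields $nQ\le AH\le nR$. Since every edge of size $h_j$ lands in exactly one factor, $\sum_{i=1}^k a_{ij}=m_{\Lambda K_n^H}\text{-count of size-}h_j\text{ edges}=\lambda_j\binom{n}{h_j}$, i.e. $s(A_j)=\lambda_j\binom{n}{h_j}$.

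For the \emph{sufficiency}, I would follow the same amalgamation-then-detach strategy. Let $\scr F$ be the single-vertex hypergraph with $m_{\scr F}(v^{h_j})=\lambda_j\binom{n}{h_j}$ for $1\le j\le m$, which is an amalgamation of $\Lambda K_n^H$. Using the given matrix $A$, color the edges of $\scr F$ so that $m_{\scr F(i)}(v^{h_j})=a_{ij}$; this is consistent because $\sum_{i=1}^k a_{ij}=s(A_j)=\lambda_j\binom{n}{h_j}=m_{\scr F}(v^{h_j})$. The resulting color-class degrees are
\begin{equation*}
d_{\scr F(i)}(v)=\sum_{j=1}^m a_{ij}h_j=(AH)_i \mbox{ \quad for } 1\le i\le k,
\end{equation*}
and the hypothesis $nQ\le AH\le nR$ reads $nq_i\le d_{\scr F(i)}(v)\le nr_i$. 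Take $g(v)=n$, which is simple since each $h_i\le n$, and apply Theorem \ref{mainthhypgen1} to obtain a simple $g$-detachment $\scr G$ on vertices $v_1,\dots,v_n$.

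The point where the two-sided version genuinely differs from the equality case is in reading off the factor structure from the approximate-equality guarantee (A2). For each color $i$ and each $v_\ell$ we have $d_{\scr G(i)}(v_\ell)\approx d_{\scr F(i)}(v)/g(v)=(AH)_i/n$, which by definition of $\approx$ means $\lfloor (AH)_i/n\rfloor\le d_{\scr G(i)}(v_\ell)\le\lceil (AH)_i/n\rceil$. The key observation—and the main technical point I would be careful about—is that the bounds $nq_i\le (AH)_i\le nr_i$ with $q_i,r_i$ integers force $q_i\le\lfloor (AH)_i/n\rfloor$ and $\lceil (AH)_i/n\rceil\le r_i$, so every detached degree lands in the interval $[q_i,r_i]$ and color class $i$ is a genuine $(q_i,r_i)$-factor. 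Meanwhile (A3) gives $m_{\scr G}(U)\approx m_{\scr F}(v^{h_j})/\binom{n}{h_j}=\lambda_j$ for each $h_j$-subset $U$, and since $\lambda_j$ is an integer this forces exact equality $m_{\scr G}(U)=\lambda_j$, so $\scr G\cong\Lambda K_n^H$. Thus $\{F_1,\dots,F_k\}$, with $F_i$ the $i$th color class, is the desired $(Q,R)$-factorization, completing the proof.
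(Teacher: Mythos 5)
Your proposal is correct and follows essentially the same route as the paper's own proof: the identical necessity counting (vertex degrees and edge counts yielding $A$), and the identical sufficiency construction via the single-vertex amalgamation $\scr F$, the coloring prescribed by $A$, the simple function $g(v)=n$, and an appeal to (A2) and (A3) of Theorem \ref{mainthhypgen1}. Your explicit justification that the integrality of $q_i, r_i, \lambda_j$ upgrades the $\approx$ guarantees to the exact bounds $q_i \leq d_{\scr G(i)}(v_\ell) \leq r_i$ and $m_{\scr G}(U)=\lambda_j$ is precisely the point the paper leaves implicit in the step ``$q_i=nq_i/n\leq d_{\scr G(i)}(v_j) \leq nr_i/n=r_i$'', so it is a welcome clarification rather than a deviation.
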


\begin{proof}
To prove the necessity, suppose that $\Lambda K_n^H$ is $(Q,R)$-factorizable. Since  $\Lambda K_n^H$ is $\sum_{i=1}^m \lambda_i \binom{n-1}{h_i-1}$-regular, we must have $s(Q)=\sum_{i=1}^k q_i\leq \sum_{i=1}^m \lambda_i \binom{n-1}{h_i-1}\leq \sum_{i=1}^k r_i=s(R)$. Since   for $1\leq j\leq m$, each edge of size $h_j$ contributes $h_j$ to the the sum of the degrees of the vertices in   $(q_i,r_i)$-factor for $1\leq i\leq k$,  there must exist non-negative integers $a_{ij}$, $1\leq i\leq k$, $1\leq j\leq m$, such that $nq_i\leq \sum_{j=1}^m a_{ij} h_j\leq nr_i$ for  $1\leq i\leq k$ and $\sum_{i=1}^k a_{ij} =\lambda_j\binom{n}{h_j}$  for $1\leq j\leq m$. 

To prove the sufficiency, let $\scr F$ be a hypergraph consisting of a single vertex $v$  
with  $m_{\scr F}(v^{h_j})=\lambda_j \binom{n}{h_j}$ for $1\leq j\leq m$. Note that $\scr F$ is an amalgamation of $\Lambda K_n^H$. 
Now we color the edges of $\scr F$ so that $m_{\scr F(i)}(v^{h_j})=a_{ij}$ for $1\leq i \leq k$, $1\leq j \leq m$. This can be done, because:
\begin{equation*}
\sum_{i=1}^k m_{\scr F(i)}(v^{h_j}) =\sum_{i=1}^k a_{ij}= 
\lambda_j\binom{n}{h_j}=m_{\scr F}(v^{h_j}) \mbox {\quad for } 1\leq j\leq m. 
\end{equation*}
Moreover,  
$$nq_i\leq d_{\scr F(i)}(v)=\sum_{j=1}^m a_{ij} h_j\leq nr_i                        \mbox{ \quad for } 1\leq i \leq k.
$$ 
Let $g:V(\scr F)\rightarrow \mathbb N$ be a function so that $g(v)=n$. Since for $1\leq i\leq m$, $h_i\leq n$,  $g$ is simple.   
By Theorem \ref{mainthhypgen1}, there exists a simple $g$-detachment $\scr G$ of $\scr F$ with $n$ vertices, say $v_1,\ldots, v_n$ such that by (A2), $q_i=nq_i/n\leq d_{\scr G(i)}(v_j) \leq nr_i/n=r_i$ for $1\leq i\leq k$, $1\leq j\leq n$, and by (A3), for each $U\subset \{v_1,\dots,v_n\}$ with $|U|=h_j$, $m_\scr G(U) \approx m_\scr F(v^{h_j})/\binom {n}{h_j} =\lambda_j\binom{n}{h_j}/\binom {n}{h_j}=\lambda_j$ for $1\leq j\leq m$. Therefore $\scr G\cong  \Lambda K_n^H$, and the $i^{th}$ color class induces a $(q_i,r_i)$-factor for $1\leq i\leq k$. 
\end{proof}

\begin{theorem} \label{almostr1rkfacknhlam}
$\Lambda K_n^H$ is almost $R$-factorizable  if and only if  $s(R)-k\leq \sum_{i=1}^m  \lambda_i\binom{n-1}{h_i-1}\leq s(R)$, and there exists a non-negative integer matrix $A=[a_{ij}]_{k \times m}$ such that $n(R-J_k)\leq AH\leq nR$, and $s(A_j)=\lambda_j\binom{n}{h_j}$ for $1\leq j\leq m$. 
\end{theorem}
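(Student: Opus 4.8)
The plan is to recognize that Theorem \ref{almostr1rkfacknhlam} is nothing more than the special case $Q = R - J_k$ of the already-established Theorem \ref{qrfacknhlam}, so that the entire statement follows by a direct substitution of hypotheses rather than by repeating the detachment argument from scratch.

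First I would record the definitional reduction: by the definition given above, an \emph{almost $R$-factorization} of $\scr G$ is exactly an $(R - J_k, R)$-factorization. Thus $\Lambda K_n^H$ is almost $R$-factorizable if and only if it is $(R - J_k, R)$-factorizable. Setting $Q := R - J_k$, each entry satisfies $q_i = r_i - 1 \leq r_i$, so $Q \leq R$ and the notion is well-posed; in the degenerate case $r_i = 0$ the resulting lower bound $q_i = -1$ is vacuous, since all degrees are non-negative, so no genuine constraint is lost.

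Next I would translate the two hypotheses of Theorem \ref{qrfacknhlam} under this choice of $Q$. Because $J_k^T = [1\dots1]_{1\times k}$, we have $s(J_k) = k$ and hence $s(Q) = s(R) - k$; the range condition $s(Q) \leq \sum_{i=1}^m \lambda_i\binom{n-1}{h_i-1} \leq s(R)$ therefore becomes precisely $s(R) - k \leq \sum_{i=1}^m \lambda_i\binom{n-1}{h_i-1} \leq s(R)$. Similarly $nQ = n(R - J_k)$, so the matrix condition $nQ \leq AH \leq nR$ becomes $n(R - J_k) \leq AH \leq nR$, while the column-sum condition $s(A_j) = \lambda_j\binom{n}{h_j}$ is carried over unchanged. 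These are exactly the conditions in the statement of Theorem \ref{almostr1rkfacknhlam}, and since Theorem \ref{qrfacknhlam} is an ``if and only if'', the equivalence transfers verbatim.

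I do not expect any real obstacle here: all of the substance — the necessity counting argument and the sufficiency construction via the single-vertex amalgamation $\scr F$ together with the detachment of Theorem \ref{mainthhypgen1} using (A2) and (A3) — is already carried out in the proof of Theorem \ref{qrfacknhlam}. If one preferred a self-contained argument, one could instead copy that proof line by line with $q_i$ replaced by $r_i - 1$ throughout; but the reduction above is cleaner and makes transparent that this theorem contains no new ideas beyond Theorem \ref{qrfacknhlam}.
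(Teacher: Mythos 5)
Your proposal is correct and is exactly the paper's own proof: the paper disposes of this theorem with the single line ``It is enough to take $Q=R-J_k$ in Theorem \ref{qrfacknhlam},'' which is the reduction you carry out (your extra checks that $s(Q)=s(R)-k$ and $nQ=n(R-J_k)$, and the remark on degenerate entries, are harmless elaborations of the same substitution). Nothing further is needed.
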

\begin{proof}
It is enough to take $Q=R-J_k$ in Theorem \ref{qrfacknhlam}. 
\end{proof}

\begin{theorem} \label{qrfacknhlammulti}
$\Lambda K_{n\times p}^{H}$ is  $(Q,R)$-factorizable  if and only if 
$s(Q)\leq \sum_{i=1}^m  \lambda_i\binom{n-1}{h_i-1}p^{h_i-1}\leq s(R)$, and there exists a non-negative integer matrix $A=[a_{ij}]_{k \times m}$ such that  
$npQ\leq AH\leq npR$, and $s(A_j)=\lambda_j\binom{n}{h_j}p^{h_j}$ for $1\leq j\leq m.$
\end{theorem}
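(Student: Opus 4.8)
The plan is to treat this as the $(Q,R)$-factorization analogue of Theorem \ref{r1rkfacknphlam}, equivalently as the multipartite version of Theorem \ref{qrfacknhlam}, and to prove it by the same amalgamation--detachment scheme, replacing the appeal to Theorem \ref{r1rkfacknhlam} by an appeal to Theorem \ref{qrfacknhlam} and tracking inequalities in place of equalities. I first record that $\Lambda K^H_{n\times p}$ has exactly $\lambda_j\binom{n}{h_j}p^{h_j}$ edges of size $h_j$ (choose $h_j$ of the $n$ parts, one vertex from each, and $\lambda_j$ copies) and is $\sum_{i=1}^m \lambda_i\binom{n-1}{h_i-1}p^{h_i-1}$-regular. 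Since the equal-part hypothesis is already built into the notation $\Lambda K^H_{n\times p}$, no analogue of the part-equalization step from Theorem \ref{r1rkfacknphlam} is needed, even though a $(Q,R)$-factorizable hypergraph need not be regular.

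For necessity, suppose $\{F_1,\dots,F_k\}$ is a $(Q,R)$-factorization. Summing the degree bounds $q_i\leq d_{F_i}(v)\leq r_i$ over $i$ at a fixed vertex and using regularity gives $s(Q)\leq \sum_{i=1}^m \lambda_i\binom{n-1}{h_i-1}p^{h_i-1}\leq s(R)$. Letting $a_{ij}$ count, with multiplicity, the edges of size $h_j$ lying in $F_i$, each such edge contributes $h_j$ to the degree sum $\sum_v d_{F_i}(v)$; since $F_i$ has $np$ vertices with degrees in $[q_i,r_i]$, this sum lies in $[npq_i,npr_i]$, whence $npq_i\leq \sum_{j=1}^m a_{ij}h_j\leq npr_i$, that is $npQ\leq AH\leq npR$. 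As each edge of size $h_j$ lies in exactly one factor, $s(A_j)=\sum_{i=1}^k a_{ij}=\lambda_j\binom{n}{h_j}p^{h_j}$.

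For sufficiency, set $\Lambda^p=[p^{h_i}\lambda_i]_{1\times m}^T$ and let $\scr F=\Lambda^p K^H_n$ on $V=\{v_1,\dots,v_n\}$, the amalgamation of $\Lambda K^H_{n\times p}$ collapsing each part to one vertex. I would first verify, via Theorem \ref{qrfacknhlam}, that $\scr F$ is $(pQ,pR)$-factorizable: the scalar condition $s(pQ)\leq \sum_i p^{h_i}\lambda_i\binom{n-1}{h_i-1}\leq s(pR)$ is exactly $p$ times the hypothesis, and the given matrix $A$ satisfies $n(pQ)\leq AH\leq n(pR)$ with $s(A_j)=p^{h_j}\lambda_j\binom{n}{h_j}$, which are precisely the conditions of Theorem \ref{qrfacknhlam} for $\Lambda^p K^H_n$. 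Coloring the resulting factorization yields $pq_i\leq d_{\scr F(i)}(v)\leq pr_i$ for all $v\in V$ and $1\leq i\leq k$. Taking $g(v)=p$ (simple since $p\geq 1$) and applying Theorem \ref{mainthhypgen1} produces a simple $g$-detachment $\scr G$ on $np$ vertices $v_{ab}$; by (A3) each $m_{\scr G}(v_{a_1b_1},\dots,v_{a_{h_j}b_{h_j}})\approx p^{h_j}\lambda_j/p^{h_j}=\lambda_j$ is forced to be $\lambda_j$, so $\scr G\cong \Lambda K^H_{n\times p}$, and by (A2) the $i$th color class is a $(q_i,r_i)$-factor.

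The only delicate point is the final degree estimate, where I must ensure that the relation $\approx$ does not violate the integer bounds $q_i$ and $r_i$. Since $q_i,r_i\in\mathbb N$ and $pq_i\leq d_{\scr F(i)}(v_a)\leq pr_i$ give $q_i\leq d_{\scr F(i)}(v_a)/p\leq r_i$, we get $q_i\leq \lfloor d_{\scr F(i)}(v_a)/p\rfloor$ and $\lceil d_{\scr F(i)}(v_a)/p\rceil\leq r_i$, so the defining inequalities of $d_{\scr G(i)}(v_{ab})\approx d_{\scr F(i)}(v_a)/p$ keep $d_{\scr G(i)}(v_{ab})$ within $[q_i,r_i]$. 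This integrality feature is exactly what allows the equality-based argument of Theorem \ref{r1rkfacknphlam} to pass to the present $(Q,R)$ setting, and it is the main thing to check carefully.
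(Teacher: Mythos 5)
Your proof is correct and follows essentially the same route as the paper's: the identical necessity count, and for sufficiency the amalgamation $\scr F=\Lambda^p K^H_n$, the appeal to Theorem \ref{qrfacknhlam} for $(pQ,pR)$-factorizability, and the $g(v)=p$ detachment via Theorem \ref{mainthhypgen1} using (A2) and (A3). Your explicit verification of the hypotheses of Theorem \ref{qrfacknhlam} and of the integrality argument keeping $d_{\scr G(i)}(v_{ab})$ within $[q_i,r_i]$ merely spells out steps the paper asserts tersely.
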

\begin{proof}
To prove the necessity, suppose that $\Lambda K_{n\times p}^{H}$ is  $(Q,R)$-factorizable.  Since  $\Lambda K_{n\times p}^{H}$ is $\sum_{i=1}^m  \lambda_i\binom{n-1}{h_i-1}p^{h_i-1}$-regular, we must have $s(Q)=\sum_{i=1}^k q_i\leq \sum_{i=1}^m  \lambda_i\binom{n-1}{h_i-1}p^{h_i-1}\leq \sum_{i=1}^k r_i=s(R)$. 
Moreover,  there must exist non-negative integers $a_{ij}$, $1\leq i\leq k$, $1\leq j\leq m$, such that $npq_i\leq\sum_{j=1}^m a_{ij} h_j\leq npr_i$ for $1\leq i\leq k$ and $\sum_{i=1}^k a_{ij} =\lambda_j\binom{n}{h_j}p^{h_j}$  for $1\leq j\leq m$. 

To prove the sufficiency, let $\Lambda^p=[p^{h_i}\lambda_i]_{1\times m}^T$, and let $\scr F=\Lambda^p K^H_{n}$ with vertex set $V=\{v_1,\dots,v_n\}$. 
Notice  that $\scr F$ is an amalgamation of $\Lambda K^H_{n\times p}$.  
By Theorem \ref{qrfacknhlam}, $\scr F$ is  $(pQ,pR)$-factorizable. 
Therefore, we can color the edges of $\scr F$ so that 
$$pq_i\leq d_{\scr F(i)}(v)\leq pr_i \mbox{ for } v\in V, 1\leq i \leq k.$$ 
Let $g:V\rightarrow \mathbb N$ be a function so that $g(v)=p$ for $v\in V$. Since $p\geq 1$,  $g$ is simple.   
By Theorem \ref{mainthhypgen1}, there exists a simple $g$-detachment $\scr G$ of $\scr F$ with $np$ vertices, say $v_i$ is detached to $v_{i1},\dots, v_{ip}$ for $1\leq i\leq n$,  such that by (A2), $q_i=pq_i/p\leq d_{\scr G(i)}(v_{ab}) \leq pr_i/p=r_i$ for $1\leq i\leq k$, $1\leq a\leq n$, $1\leq b\leq p$, and by (A3), $m_\scr G(v_{a_1b_1},\dots, v_{a_{h_j}b_{h_j}}) \approx m_\scr F(v_{a_1},\dots, v_{a_{h_j}})/p^{h_j} =p^{h_j}\lambda_j/p^{h_j}=\lambda_j$ for $1\leq j\leq m$, $1\leq a_1<\dots<a_{h_j}\leq n$, $1\leq b_1,\dots,b_{h_j}\leq p$.  Therefore $\scr G\cong  \Lambda K^H_{n\times p}$, and the $i^{th}$ color class induces a $(p_i,r_i)$-factor for $1\leq i\leq k$. 
\end{proof}

\begin{theorem} 
$\Lambda K_{n\times p}^{H}$ is almost $R$-factorizable  if and only if 
$s(R)-k\leq \sum_{i=1}^m  \lambda_i\binom{n-1}{h_i-1}p^{h_i-1}\leq s(R)$, and there exists a non-negative integer matrix $A=[a_{ij}]_{k \times m}$ such that  
$np(R-J_k)\leq AH\leq npR$, and $s(A_j)=\lambda_j\binom{n}{h_j}p^{h_j}$ for $1\leq j\leq m.$
\end{theorem}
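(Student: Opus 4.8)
The plan is to recognize that this statement is the ``almost factorization'' analogue of Theorem \ref{qrfacknhlammulti}, exactly as Theorem \ref{almostr1rkfacknhlam} is the almost analogue of Theorem \ref{qrfacknhlam}. By definition an almost $R$-factorization is an $(R-J_k,R)$-factorization, so the entire claim should follow by specializing the $(Q,R)$-factorization criterion for $\Lambda K_{n\times p}^H$ to the choice $Q=R-J_k$. In other words, I would not re-run the detachment machinery at all; I would simply reduce to a result already proved.

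Concretely, I would invoke Theorem \ref{qrfacknhlammulti} with $Q$ replaced by $R-J_k$ and check that each of its three conditions becomes the corresponding condition stated here. Since $J_k^T=[1\dots1]_{1\times k}$, we have $s(R-J_k)=s(R)-k$, so the inequality $s(Q)\leq \sum_{i=1}^m \lambda_i\binom{n-1}{h_i-1}p^{h_i-1}\leq s(R)$ turns into $s(R)-k\leq \sum_{i=1}^m \lambda_i\binom{n-1}{h_i-1}p^{h_i-1}\leq s(R)$. Likewise the matrix condition $npQ\leq AH\leq npR$ becomes $np(R-J_k)\leq AH\leq npR$, while the column-sum condition $s(A_j)=\lambda_j\binom{n}{h_j}p^{h_j}$ is untouched by the substitution. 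Thus the hypotheses here are precisely the hypotheses of Theorem \ref{qrfacknhlammulti} at $Q=R-J_k$, and its conclusion (that $\Lambda K_{n\times p}^H$ is $(R-J_k,R)$-factorizable) is by definition the assertion that $\Lambda K_{n\times p}^H$ is almost $R$-factorizable.

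There is no genuine obstacle at this level: once Theorem \ref{qrfacknhlammulti} is in hand, the present theorem is a one-line specialization, and I would write the proof simply as ``it is enough to take $Q=R-J_k$ in Theorem \ref{qrfacknhlammulti}.'' The only point worth a moment's care is the bookkeeping that $s(R-J_k)=s(R)-k$ and that the substitution leaves the $s(A_j)$ equalities intact; both are immediate from the definitions of $s(\cdot)$ and $J_k$. All the real work has already been absorbed into Theorem \ref{qrfacknhlammulti}, which in turn rests on the detachment Theorem \ref{mainthhypgen1}.
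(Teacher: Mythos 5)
Your proposal is correct and is exactly the paper's own proof: the paper also disposes of this theorem by writing ``It is enough to take $Q=R-J_k$ in Theorem \ref{qrfacknhlammulti}.'' Your additional bookkeeping (that $s(R-J_k)=s(R)-k$ and that the column-sum conditions are unaffected) is a harmless elaboration of the same one-line specialization.
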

\begin{proof}
It is enough to take $Q=R-J_k$ in Theorem \ref{qrfacknhlammulti}. 
\end{proof}

\section{Acknowledgment}
The author wishes to thank the  referee and professor D. G. Hoffman for very carefully reading this manuscript and many suggestions.

\bibliographystyle{model1-num-names}
\bibliography{<your-bib-database>}

\end{document}